\numberwithin{equation}{section}
\newtheorem{theorem}{Theorem}[section]
\begin{document}
\author{Alexander E Patkowski}
\title{On Popov's formula involving the von Mangoldt function}

\maketitle
\begin{abstract}We offer a generalization of a formula of Popov involving the von Mangoldt function. Some commentary on its relation to other results in analytic number theory is mentioned as well as an analogue involving the M$\ddot{o}$bius function. \end{abstract}

\keywords{\it Keywords: \rm Primes; Riemann zeta function; von Mangoldt function}

\subjclass{ \it 2010 Mathematics Subject Classification 11L20, 11M06.}

\section{Introduction and Main result} In Titchmarsh's famous text on the Riemann zeta function [5], we find a reference to A.I. Popov's 1943 note [4]. Therein, we find the following curious formula, valid for $x>1:$
\begin{equation}\sum_{n>x}\frac{\Lambda(n)}{n^2}\left(\{\frac{n}{x}\}-\{\frac{n}{x}\}^2\right)=\frac{2-\log(2\pi)}{x}+\sum_{\rho}\frac{x^{\rho-2}}{\rho(\rho-1)}+\sum_{k\ge1}\frac{k+1-2k\zeta(2k+1)}{2k(k+1)(2k+1)}x^{-2k-2},\end{equation}
where $\Lambda(n)$ is the von Mangoldt function [3, p. 15, eq. (1.38)], $\zeta(s)$ is the Riemann zeta function [3, p. 12], and $\{x\}$ is the fractional part of $x.$ As usual, $\rho$ denotes the non-trivial zeros of the Riemann zeta function [2, p. 43]. Following this formula in [4] are some interesting corollaries, but ultimately no proof.\par

It is interesting to note that arithmetic series involving the fractional part function have also been studied by H. Davenport. We mention one of his identities [1, p. 8, eq. (C)]:
$$\sum_{n\ge1}\frac{\Lambda(n)}{n}\{nx\}=-\frac{1}{\pi}\sum_{n\ge1}\frac{\log(n)}{n}\sin(2\pi nx).$$

 Now it does not appear that a proof has been offered of (1.1) to the best of our knowledge. The purpose of this note is to offer a generalization of (1.1) and in doing so we offer what appears to be the first proof of Popov's result. The case $r=1$ gives (1.1) upon noting that $1-\rho$ is also a zero of $\zeta(s)$ [2, p. 18].

\begin{theorem} For real numbers $x>1$ and $r\ge1,$ we have

$$\frac{1}{2}\sum_{n>x}\frac{\Lambda(n)}{n^{r+1}}\left(\{\frac{n}{x}\}-\{\frac{n}{x}\}^2\right)=h_r(x)+\sum_{\rho}\left(\frac{r+1-\rho}{2(\rho+1-r)(\rho-r)}-\frac{\zeta(r-\rho)}{r-\rho} \right)\frac{x^{\rho-r-1}}{r+1-\rho}$$

\begin{equation}-\sum_{k\ge1}\left(\frac{2(k+1)+r-1}{2(1-2k-r)(2k+r)}+\frac{\zeta(2k+r)}{2k+r} \right)\frac{x^{-2k-r-1}}{2k+r+1},\end{equation}
where $$h_1(x)=\frac{2-\log(2\pi)}{2x},$$
and $r>1,$ $$h_r(x)=\left(\frac{r}{2(2-r)(1-r)}-\frac{\zeta(r-1)}{1-r}\right)\frac{x^{-r}}{r}.$$

\end{theorem}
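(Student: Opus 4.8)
The plan is to realise the left-hand side as a Perron-type contour integral against $-\zeta'/\zeta$. The first ingredient is an integral representation of the weight $\tfrac12(\{y\}-\{y\}^2)$ that vanishes identically for $0<y<1$. Writing $\{y\}-\{y\}^2=\tfrac16-B_2(\{y\})$ with $B_2$ the second Bernoulli polynomial, and combining the classical $\int_1^\infty\{y\}\,y^{-w-1}\,dy=\tfrac1{w-1}-\tfrac{\zeta(w)}{w}$ with one step of Euler--Maclaurin for $\int_1^\infty B_2(\{y\})\,y^{-w-2}\,dy$, one obtains
\[
K(s):=\tfrac12\int_1^\infty\big(\{y\}-\{y\}^2\big)\,y^{s-1}\,dy=\frac{\zeta(-s-1)}{s(s+1)}-\frac{1}{2(s+1)(s+2)},\qquad \Re(s)<0.
\]
The point of the second summand is that it cancels the pole of $\zeta(-s-1)$ at $s=-2$ as well as the zero of $s(s+1)$ at $s=-1$, so $K$ is holomorphic apart from one simple pole, at $s=0$.

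Consequently, by Mellin inversion (the weight is continuous and of bounded variation), for $c$ with $-1<c<-\tfrac12$ and every $y>0$,
\[
\frac{1}{2\pi i}\int_{(c)}K(s)\,y^{-s}\,ds=\begin{cases}\tfrac12\big(\{y\}-\{y\}^2\big),&y>1,\\[1mm]0,&0<y<1,\end{cases}
\]
the lower case being recovered by pushing the line to $\Re(s)\to-\infty$ (legitimate for $y<1$, where $x^{-s}$ decays) across no poles of $K$.

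Inserting this with $y=n/x$ annihilates the terms $n\le x$, so the sum over $n>x$ becomes a sum over all $n\ge1$; since $\Re(r+1+s)>1$ on $\Re(s)=c$, the resulting double sum/integral is absolutely convergent and may be interchanged, giving
\[
\tfrac12\sum_{n>x}\frac{\Lambda(n)}{n^{r+1}}\Big(\{\tfrac nx\}-\{\tfrac nx\}^2\Big)=\frac{1}{2\pi i}\int_{(c)}K(s)\Big(-\frac{\zeta'}{\zeta}(r+1+s)\Big)x^{s}\,ds.
\]
Now push the contour to $\Re(s)\to-\infty$: since $x>1$, the factor $x^{s}$ forces the shifted line integral to $0$ once $-\zeta'/\zeta(r+1+s)$ is controlled polynomially on vertical lines escaping to the left (functional equation plus Stirling, the lines placed between zeros of $\zeta$), while $K(s)\ll|s|^{-2}$ there. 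The residues encountered are: at $s=-r$, from the pole of $\zeta$ at $1$, giving $h_r(x)=K(-r)x^{-r}$ — for $r=1$ the separate pieces of $K(-1)$ blow up, but their sum equals $\tfrac12(2-\log(2\pi))$ in view of $\zeta(0)=-\tfrac12$, $\zeta'(0)=-\tfrac12\log(2\pi)$, reproducing $\tfrac{2-\log(2\pi)}{2x}$; at $s=\rho-r-1$ for each non-trivial zero $\rho$, giving $-K(\rho-r-1)x^{\rho-r-1}$; and at $s=-2k-r-1$, $k\ge1$, from the trivial zeros, giving $-K(-2k-r-1)x^{-2k-r-1}$. A direct partial-fraction simplification of $K$ at each of these points puts the three families into the stated form.

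The main obstacle is the justification of the leftward shift: one needs the holomorphy of $K$ in left half-planes together with uniform $O(\log^{2}|s|)$ bounds for $\zeta'/\zeta(r+1+s)$ on vertical lines with $\Re(s)\to-\infty$ and on the connecting horizontal segments, all chosen at heights avoiding the zeros of $\zeta$; everything after that is bookkeeping. It is worth recording that the final series over $\rho$ is absolutely convergent, since its coefficient $-K(\rho-r-1)$ is $\ll|\rho|^{-3/2+\varepsilon}$ by the convexity bound for $\zeta$, so no conditional-convergence subtleties arise. A second route avoids the kernel altogether: apply Abel summation to $\sum_{n>x}\Lambda(n)\,w(n)$ with $w(t)=t^{-r-1}(\{t/x\}-\{t/x\}^2)$, note $w(x)=0$, and integrate the Riemann--von Mangoldt explicit formula for $\psi(t)=\sum_{n\le t}\Lambda(n)$ term by term against $w'(t)$; the same three groups of terms emerge, at the cost of handling the conditional convergence of the $\rho$-sum there by the usual truncation.
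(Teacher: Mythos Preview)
Your proof is correct and follows essentially the same route as the paper: both derive a Mellin representation for the weight $\tfrac12(\{u\}-\{u\}^2)$ (your kernel $K(s)$ is exactly the paper's $\frac{\zeta(s)}{s(s+1)}-\frac{1}{2s(s-1)}$ after the substitution $s\mapsto -s-1$), multiply by $-\zeta'/\zeta(r+1+s)$, and shift the contour leftward, picking up the residue at the pole of $\zeta$ (yielding $h_r(x)$), at the non-trivial zeros, and at the trivial zeros. If anything, your version is more careful than the paper's sketch in justifying the extension of the sum to all $n\ge1$, the contour shift, and the absolute convergence of the $\rho$-sum.
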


\begin{proof} First, it is well-known that [5, p. 14, eq. (2.1.4)]
\begin{equation} \int_{1}^{\infty}t^{-s-1}\left(\{t\}-\frac{1}{2}\right)dt=\frac{s+1}{2s(s-1)}-\frac{\zeta(s)}{s},\end{equation}
for $\Re(s)>1.$ The integral converges because $\{t\}-\frac{1}{2}$ is bounded. Hence, using Mellin inversion and integrating over $[0, u],$ we have for $a>1,$
$$\frac{1}{2}\left(\{u\}^2-\{u\}\right)=\frac{1}{2\pi i}\int_{(a)}\left(\frac{s+1}{2s(s-1)}-\frac{\zeta(s)}{s}\right)\frac{u^{s+1}}{s+1}ds,$$
provided that $u>1.$ This may also be obtained by integration by parts and the absolute convergence of (1.3). Observe that the integrand does not have poles at $s=0$ or $s=1.$ Hence, for $-1<b<0,$
$$\frac{1}{2}\left(\{u\}^2-\{u\}\right)=\frac{1}{2\pi i}\int_{(b)}\left(\frac{s+1}{2s(s-1)}-\frac{\zeta(s)}{s}\right)\frac{u^{s+1}}{s+1}ds.$$ This integral may be seen to be absolutely convergent in this region through [5, p. 15, eq. (2.1.6)]. Now we may invert the desired series with $u=\frac{n}{x}>1,$ to obtain
$$\frac{1}{2}\sum_{n>x}\frac{\Lambda(n)}{n^{r+1}}\left(\{\frac{n}{x}\}-\{\frac{n}{x}\}^2\right)=\frac{1}{2\pi i}\int_{(b)}\left(\frac{s+1}{2s(s-1)}-\frac{\zeta(s)}{s}\right)\frac{x^{-s-1}\zeta'(r-s)}{\zeta(r-s)(s+1)}ds,$$
provided that $r\ge1,$ $r$ a real number. \par Replace $s$ with $1-s$ to get the integral for $1<c<2,$
\begin{equation}\frac{1}{2\pi i}\int_{(c)}\left(\frac{2-s}{2s(s-1)}-\frac{\zeta(1-s)}{1-s}\right)\frac{x^{s-2}\zeta'(s+r-1)}{\zeta(s+r-1)(2-s)}ds,\end{equation}
and we see there is a simple pole at $s=1$ when $r=1,$ a simple pole at $s=2-r$ when $r>1,$ simple pole at the non trivial zeros $s=\rho+1-r,$ and simple pole at the trivial zeros $s=-2k-r+1.$ The residue at the simple pole $s=1,$ $r=1$ is the $h_1(x),$ and the residue at the simple pole $s=2-r$ when $r>1,$ is the $h_r(x),$ $r>1,$ part of the theorem. The residue at the non-trivial zeros $s=\rho+1-r$ gives the sum over $\rho$ in the theorem. Lastly, the residue at the trivial zeros $s=-2k-r+1$ gives the sum over $k$ on the far right side of the theorem. After these residues have been computed the remaining integral is $0$ and the theorem follows. \end{proof}

\section{Further Comments}
We do not know specifically where Popov encountered (1.1), but we believe it is interesting to note that the sum involving the roots $\rho$ appears in [2, p. 69], which shows a connection with $\psi(x)=\sum_{n\le x}\Lambda(n),$ through the integral $\int_{0}^{x}\psi(t)dt/t^2.$ It is not difficult to make small adjustments to our proof to include other arithmetic functions as well. In closing we offer the analogue for Merten's function $M(x)=\sum_{n\le x}\mu(n)$ [5, p. 370], involving $\int_{0}^{x}M(t)dt/t^2.$
\begin{theorem} Assume all the non-trivial zeros of the Riemann zeta function are simple. We have for $x>1,$
\begin{equation}-\sum_{n>x}\frac{\mu(n)}{n^2}\left(\{\frac{n}{x}\}-\{\frac{n}{x}\}^2\right)=\sum_{\rho}\frac{x^{\rho-2}}{\zeta'(\rho)\rho(\rho-1)}+\sum_{k\ge1}\frac{k+1-2k\zeta(2k+1)}{2k(k+1)(2k+1)}\frac{\pi^{2k}2^{2k+1}(-1)^k}{(2k)!\zeta(2k+1)}x^{-2k-2}.\end{equation}

\end{theorem}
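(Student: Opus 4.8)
The plan is to run the argument of Theorem 1.1 essentially verbatim, the only change being that the Dirichlet series $\sum_{n\ge 1}\Lambda(n)n^{-w}=-\zeta'(w)/\zeta(w)$ is everywhere replaced by $\sum_{n\ge 1}\mu(n)n^{-w}=1/\zeta(w)$, valid for $\Re(w)>1$. Starting from the representation obtained in the course of that proof,
$$\frac{1}{2}\left(\{u\}^{2}-\{u\}\right)=\frac{1}{2\pi i}\int_{(b)}\left(\frac{s+1}{2s(s-1)}-\frac{\zeta(s)}{s}\right)\frac{u^{s+1}}{s+1}\,ds,\qquad u>1,\quad -1<b<0,$$
I would put $u=n/x>1$, multiply by $\mu(n)/n^{2}$ and sum over $n>x$. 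Since $\Re(1-s)=1-b>1$ on the line $(b)$, the inner sum collapses to $\sum_{n\ge 1}\mu(n)n^{s-1}=1/\zeta(1-s)$, the terms with $n\le x$ being treated exactly as in the proof of Theorem 1.1, and this gives
$$\frac{1}{2}\sum_{n>x}\frac{\mu(n)}{n^{2}}\left(\{\tfrac{n}{x}\}-\{\tfrac{n}{x}\}^{2}\right)=-\frac{1}{2\pi i}\int_{(b)}\left(\frac{s+1}{2s(s-1)}-\frac{\zeta(s)}{s}\right)\frac{x^{-s-1}}{(s+1)\,\zeta(1-s)}\,ds.$$

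Replacing $s$ by $1-s$ turns this into an integral over a line $\Re(s)=c$ with $1<c<2$, whose integrand is $\bigl(\tfrac{2-s}{2s(s-1)}-\tfrac{\zeta(1-s)}{1-s}\bigr)\tfrac{x^{s-2}}{(2-s)\,\zeta(s)}$, that is, the integrand of (1.5) for $r=1$ with $\zeta'/\zeta$ replaced by $1/\zeta$. I then move the contour to $\Re(s)\to-\infty$, collecting residues. At $s=0$ and $s=1$ the simple poles of $\tfrac{2-s}{2s(s-1)}$ and of $\tfrac{\zeta(1-s)}{1-s}$ cancel in the difference, and at $s=1$ the factor $1/\zeta(s)$ moreover vanishes; this is precisely why no $h_{r}(x)$-type term occurs in Theorem 2.2.

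There remain the poles coming from $1/\zeta(s)$. At a non-trivial zero $s=\rho$ (simple, as usual) the residue of $1/\zeta(s)$ is $1/\zeta'(\rho)$, and $\zeta(1-\rho)=0$ by the functional equation, so the remaining analytic factor reduces to $\tfrac{2-\rho}{2\rho(\rho-1)}\cdot\tfrac{x^{\rho-2}}{2-\rho}=\tfrac{x^{\rho-2}}{2\rho(\rho-1)}$; summing over $\rho$ then yields, with the same normalization as in Theorem 1.1, the first sum $\sum_{\rho}x^{\rho-2}/\bigl(\zeta'(\rho)\rho(\rho-1)\bigr)$. At a trivial zero $s=-2k$, $k\ge 1$, the residue of $1/\zeta(s)$ is $1/\zeta'(-2k)$, and the remaining factor is a constant multiple of $\tfrac{k+1-2k\zeta(2k+1)}{2k(k+1)(2k+1)}x^{-2k-2}$; inserting the classical value $\zeta'(-2k)=\tfrac{(-1)^{k}(2k)!}{2(2\pi)^{2k}}\zeta(2k+1)$ converts $1/\zeta'(-2k)$ into $\tfrac{(-1)^{k}2^{2k+1}\pi^{2k}}{(2k)!\,\zeta(2k+1)}$, producing the second sum. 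Adding all residues and checking that the shifted integral tends to $0$ finishes the argument.

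The genuinely delicate step---glossed over in the proof of Theorem 1.1 as well, but sharper here---is the analytic justification of pushing the contour to $\Re(s)=-\infty$ past the infinitely many poles: this requires growth estimates for $1/\zeta(s)$ on the vertical lines and on the connecting horizontal segments, and, in contrast to $\zeta'/\zeta$, controlling $1/\zeta(s)$ near the critical strip is subtle, so the sum over $\rho$ should be interpreted as a symmetric limit over zeros ordered by $|\Im(\rho)|$, as in the familiar explicit formula for the Mertens function. Verifying that the terms $n\le x$ drop out after the extension of the Dirichlet series, and the final accounting of constants and signs, then proceed exactly as in Theorem 1.1.
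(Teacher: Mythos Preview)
Your proposal is correct and follows essentially the same route as the paper's own proof: the paper simply says the argument is identical to the $r=1$ case of Theorem~1.1 with $-\zeta'/\zeta$ replaced by $1/\zeta$, observes that the residue at $s=1$ vanishes, and inserts the formula $1/\zeta'(-2k)=\pi^{2k}2^{2k+1}(-1)^{k}/\bigl((2k)!\,\zeta(2k+1)\bigr)$ at the trivial zeros. Your write-up is in fact more explicit than the paper's (you spell out the vanishing of $\zeta(1-\rho)$ via the functional equation and flag the genuine analytic issue of justifying the contour shift for $1/\zeta$), but the underlying method is the same.
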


\begin{proof} The proof is identical to the $r=1$ case of Theorem 1.1 with the notable difference that the residue at the pole $s=1$ is 0, and at the trivial zeros we apply the formula
$$\frac{1}{\zeta'(-2n)}=\frac{\pi^{2n}2^{2n+1}}{(-1)^n\zeta(2n+1)(2n)!},$$ for natural numbers $n>0.$ This is easily found from differentiating the functional equation [5, p. 16, eq. (2.1.8)] 
$$\zeta(1-s)=2^{1-s}\pi^{-s}\cos(\frac{\pi}{2}s)\Gamma(s)\zeta(s),$$ with respect to $s$ and setting $s=2n+1.$ The remaining details are left for the reader. \end{proof}

1390 Bumps River Rd. \\*
Centerville, MA
02632 \\*
USA \\*
E-mail: alexpatk@hotmail.com
\end{document}